\newtheorem{prethm}{{\bf Theorem}}[section]
\newenvironment{thm}{\begin{prethm}{\hspace{-0.5
em}{\bf.}}}{\end{prethm}}
\newtheorem{prepro}{{\bf Theorem}}
\newtheorem{precor}[prethm]{{\bf Corollary}}
\newenvironment{cor}{\begin{precor}{\hspace{-0.5
em}{\bf.}}}{\end{precor}}
\newtheorem{preconj}[prethm]{{\bf Conjecture}}
\newtheorem{preremark}[prethm]{{\bf Remark}}
\newenvironment{remark}{\begin{preremark}\em{\hspace{-0.5
em}{\bf.}}}{\end{preremark}}
\newtheorem{prelem}[prethm]{{\bf Lemma}}
\newenvironment{lem}{\begin{prelem}{\hspace{-0.5
em}{\bf.}}}{\end{prelem}}
\newtheorem{preque}[prethm]{{\bf Question}}
\newtheorem{preobserv}[prethm]{{\bf Observation}}
\newtheorem{predef}[prethm]{{\bf Definition}}
\newtheorem{preproposition}[prethm]{{\bf Proposition}}
\newtheorem{preproof}{{\bf Proof.}}
\newtheorem{preprooff}{{\bf Proof}}
\newenvironment{proof}[1]{\begin{preproof}{\rm
#1}\hfill{$\Box$}}{\end{preproof}}
\newtheorem{preproofs}{{\bf The second proof of }}
\newtheorem{preprooft}{{\bf Third proof of }}
\newtheorem{preproofF}{{\bf Proof of}}
\title{\bf\Large 
Factors and connected factors in tough graphs with high isolated toughness 
}
\author{{\normalsize{\sc Morteza Hasanvand${}$} }\vspace{3mm}
\\{\footnotesize{${}$\it Department of Mathematical
 Sciences, Sharif
University of Technology, Tehran, Iran}}
{\footnotesize{}}\\{\footnotesize{ $\mathsf{morteza.hasanvand@alum.sharif.edu }$ }}}
\date{}
\def\epsilon {\text{$\varepsilon$}}
\begin{document}
\maketitle
\begin{abstract}{
Let $G$ be a graph and let $f$ be a positive integer-valued function on $V(G)$. Assume that for all $S\subseteq V(G)$, $$\sum_{v\in I(G\setminus S)}f(v)(f(v)+1)\le |S|,$$ where $I(G\setminus S)$ denotes the set of isolated vertices of $G\setminus S$. In this paper, we show that if  for all $S\subseteq V(G)$, $$\omega(G\setminus S)\le \sum_{v\in S}(f(v)-1)+1,$$ and $\sum_{v\in V(G)}f(v)$ is even, then $G$ has a factor $F$ such that for each vertex $v$, $d_F(v)=f(v)$, where $\omega(G\setminus S)$ denotes the number of components of $G\setminus S$. Moreover, we show that if for all $S\subseteq V(G)$, $$\omega(G\setminus S)\le \frac{1}{4}|S|+1,$$ and  $f\ge 2$, then $G$ has a connected factor $H$ such that for each vertex $v$, $d_H(v)\in \{f(v),f(v)+1\}$.
\\
\\
\noindent {\small {\it Keywords}:
\\
Toughness;
isolated toughness;
regular factor;
connected factor;
 $f$-factor.
}} {\small
}
\end{abstract}
%
%
%==============================================================================
%
%
%
%
%
%
%
%
%
%
%
%
%									Introduction
%==============================================================================
\section{Introduction}
%==============================================================================
%									Definitions
In this article, all graphs have no loop, but multiple edges are allowed and a simple graph is a graph without multiple edges.
 Let $G$ be a graph. 
The vertex set and the edge set of $G$ are denoted by $V(G)$ and $E(G)$, respectively. 
We also denote by $iso(G)$, $odd(G)$, and $\omega(G)$ the number of isolated vertices of $G$, the number of components of $G$ with odd number of vertices, and the number of components of $G$, respectively.
For a vertex set $S$ of $G$, we denote by $G[S]$ the induced subgraph of $G$ with the vertex set $S$ containing
precisely those edges of $G$ whose ends lie in $S$.
The vertex set $S$ is called
 an {\it independent} set, if there is no edge
of $G$ connecting vertices in $S$.
The maximum size of all independent sets of $G$ is denoted by $\alpha(G)$.
Let $t$ be a positive real number, 
a graph $G$ is said to be
 {\it $t$-tough}, if $\omega(G\setminus S)\le \max\{1,\frac{1}{t}|S|\}$ for all $S\subseteq V(G)$. 
Furthermore, $G$ is said to be 
{\it $t$-iso-tough}, if $iso(G\setminus S)\le \frac{1}{t}|S|$ for all $S\subseteq V(G)$. 
This definition is a little different from~\cite{Ma-Yu-2007, Ma-Liu-2003} for the sake of simplicity.
Note that when $G$ is $t$-iso-tough, for each vertex $v$, the number of its neighbours must be at least $t$ and hence the conditions $d_G(v)\ge t$ and $V(G)\ge t+1$ must automatically hold.
More generally, when $t$ is a real function on $V(G)$, we say that $G$ is 
{\it $t$-iso-tough}, 
if for all $S\subseteq V(G)$, $\sum_{v\in I(G\setminus S)}t(v)\le |S|$, 
where $ I(G\setminus S)$ denotes the set of all isolated vertices of $G\setminus S$. 
We denote by $N_G(I)$ the set of all neighbours of vertices of $I$ in $G$.
For a set $A$ of integers, 
an {\it $A$-factor} is a spanning subgraph with vertex degrees in $A$. 
Let $g$ and $f$ be two integer-valued functions on $V(G)$.
A {\it $(g,f)$-factor} of $G$ is a spanning subgraph $F$ such that for each vertex $v$, $g(v)\le d_F(v)\le f(v)$.
When $g=f-1$, we call it a $\{g,g+1\}$-factor as well.
An {\it $f$-factor} of $G$ refers to a spanning subgraph $F$ such that for each vertex $v$, $d_F(v)=f(v)$.
A {\it near $f$-factor} refers to a spanning subgraph $F$ such that for each vertex $v$, $d_F(v)=f(v)$, except for at most one vertex $u$ with $d_F(u)=f(u)+1$.
Note that when the sum of all $f(v)$ taken over all vertices $v$ is even, $F$ is a near $f$-factor if and only if $F$ is an $f$-factor.
Note that several theorems in graph theory for the existence of $f$-factors can be developed to a near $f$-factor version.
 This type of factors is useful when a factor is required for extending to connected factors with bounded degrees as the proof of Theorem~\ref{thm:tree-connected;(f,f+1)-factor}. For example, see~\cite{Cai-1997, ClosedWalks}.
For convenience, we write $\min f$ for $\min\{f(v):v\in V(G)\}$ and write $\max f$ for $\max\{f(v):v\in V(G)\}$.
Let $A$ and $B$ be two disjoint vertex sets.
We denote by $\omega_{f}(G,A,B)$ the number of components $C$ of $G\setminus (A\cup B)$ satisfying $\sum_{v\in V(C)}f(v)\not \stackrel{2}{\equiv}d_G(C, B)$, 
where $d_G(C, B)$ denotes the number of edges of $G$ with one end in $V(C)$ and the other one in $B$.
Throughout this article, all variables $k$ are positive integers.
%
%==============================================================================
%			
%
%
%
%
%
%
%
%
%
%==============================================================================
%

In 1947 Tutte introduced the following criterion for the existence of a perfect matching.
\begin{thm}{\rm (\cite{Tutte-1947})}\label{thm:Tutte1-factor}
A graph $G$ has a $1$-factor if and only if for all $S\subseteq V(G)$, 
$odd(G\setminus S)\le |S|$.
\end{thm}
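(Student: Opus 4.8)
The statement to prove is Tutte's criterion, an \emph{if and only if}, so I would split the argument into the easy necessity direction and the substantive sufficiency direction. The necessity is a pure parity count: suppose $G$ has a $1$-factor $F$ and fix $S\subseteq V(G)$. For each odd component $C$ of $G\setminus S$, the edges of $F$ lying inside $C$ cover an even number of vertices of $C$, so since $|V(C)|$ is odd, at least one vertex of $C$ must be matched by $F$ to a vertex of $S$. These matching edges are vertex-disjoint, hence assign to each odd component of $G\setminus S$ a distinct vertex of $S$, giving $odd(G\setminus S)\le |S|$. Taking $S=\varnothing$ in the hypothesis also records a fact I will use: the Tutte condition forces $odd(G)\le 0$, so every component of $G$ is even and in particular $|V(G)|$ is even.

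For the converse I would argue by contradiction using edge-maximality. First note that adding an edge cannot destroy the Tutte condition, since deleting any $S$ from a supergraph only merges components and so never increases $odd(\cdot\setminus S)$. Therefore, if some graph on the vertex set $V(G)$ satisfied the Tutte condition yet had no $1$-factor, I could add edges until reaching an edge-maximal such graph $G^{*}$, and it would suffice to contradict the existence of $G^{*}$. Let $U=\{u\in V(G^{*}):d_{G^{*}}(u)=|V(G^{*})|-1\}$ be the set of vertices joined to all others.

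The crux is the structural claim that $G^{*}\setminus U$ is a disjoint union of complete graphs. Granting this claim, a $1$-factor is built directly: in each component of $G^{*}\setminus U$ pair up the vertices arbitrarily, leaving exactly one unmatched vertex in each odd component; match each such leftover vertex to a private vertex of $U$, which is possible because the Tutte condition gives $odd(G^{*}\setminus U)\le |U|$ and every vertex of $U$ is universal; finally match the still-unused vertices of $U$ among themselves, which is possible because a parity count (using that $|V(G^{*})|$ is even and that the internally matched part is even) shows their number is even, and $U$ induces a clique. This perfect matching contradicts the choice of $G^{*}$.

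The hard part is proving the clique claim, and this is where I expect the real work. If $G^{*}\setminus U$ is not a union of cliques, then there are vertices $x,y,z$ with $xy,yz\in E(G^{*})$ but $xz\notin E(G^{*})$, and since $y\notin U$ there is a further vertex $w$ with $yw\notin E(G^{*})$. By edge-maximality both $G^{*}+xz$ and $G^{*}+yw$ have $1$-factors, say $M_{1}$ and $M_{2}$, and necessarily $xz\in M_{1}$ and $yw\in M_{2}$. I would then examine the symmetric difference $M_{1}\triangle M_{2}$, which is a disjoint union of paths and even cycles alternating between $M_{1}$ and $M_{2}$, trace the component(s) containing the edges $xz$ and $yw$, and reroute the matching along them so as to obtain a perfect matching that avoids both $xz$ and $yw$, i.e.\ a genuine $1$-factor of $G^{*}$ itself. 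Organizing this rerouting cleanly, according to whether $xz$ and $yw$ lie on the same or on different components of $M_{1}\triangle M_{2}$, is the one genuinely delicate point; everything else is bookkeeping.
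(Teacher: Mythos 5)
The paper offers no proof of this statement: it is quoted as Tutte's 1947 theorem with only a citation, so there is nothing in the text to compare your argument against. What you propose is the standard Lov\'asz-style proof by edge-maximal counterexample, and it is correct in outline: the parity count for necessity, the observation that adding edges preserves the condition, the reduction to the claim that $G^{*}\setminus U$ is a disjoint union of cliques, and the assembly of the $1$-factor from that claim are all sound. Two details in the step you leave as a sketch deserve tightening. First, since $M_{1}$ and $M_{2}$ are perfect matchings (of $G^{*}+xz$ and $G^{*}+yw$ respectively), every vertex has degree $0$ or $2$ in $M_{1}\triangle M_{2}$, so its nontrivial components are even alternating cycles only --- no paths arise, which actually simplifies your case split. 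Second, when $xz$ and $yw$ lie on the same alternating cycle $C$, no perfect matching of $V(C)$ using only edges of $C$ can avoid both new edges (an even cycle has exactly two perfect matchings, and here one contains $xz$, the other $yw$), so the rerouting must leave $C$ through one of the edges $xy,yz\in E(G^{*})$ that you introduced: walking along $C$ from $y$ through $w$ until first meeting $v_{0}\in\{x,z\}$ yields an alternating path $P$ beginning and ending with $M_{2}$-edges, the $M_{1}$-edges of $P$ together with $yv_{0}$ perfectly match $V(P)$, and combining these with the $M_{2}$-edges off $P$ gives a $1$-factor of $G^{*}$, the desired contradiction. With that made explicit the argument is complete.
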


In 1978 Vergenas formulated a criterion for the existence of $(1,f)$-factors and showed that the criterion becomes simpler for the following special case.
\begin{thm}{\rm (\cite{LasVergnas-1978})}\label{thm:Vergenas}
{Let $G$ be a graph and let $f$ be an integer-valued function on $V(G)$ with $f\ge 2$.
Then $G$ has a $(1,f)$-factor if and only if for all $S\subseteq V(G)$,
$iso(G\setminus S)\le \sum_{v\in S} f(v).$
}\end{thm}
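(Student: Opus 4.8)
The plan is to prove the two implications separately: the forward (necessity) direction by a direct counting argument, and the converse (sufficiency) by reducing the existence of a $(1,f)$-factor to the existence of a perfect matching in a suitable auxiliary graph, to which Theorem~\ref{thm:Tutte1-factor} then applies.

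For necessity, I would suppose $G$ has a $(1,f)$-factor $F$ and fix an arbitrary $S\subseteq V(G)$. Writing $I=I(G\setminus S)$ for the set of isolated vertices of $G\setminus S$, every $v\in I$ has all of its $G$-neighbours in $S$, so all $F$-edges incident to $v$ join $v$ to $S$; since $d_F(v)\ge 1$, at least one such edge exists, and because $I$ is independent these edges are distinct and each meets $S$ in exactly one end. Counting from the side of $S$ then yields
\[
iso(G\setminus S)=|I|\le \sum_{v\in S}d_F(v)\le \sum_{v\in S}f(v),
\]
which is the stated inequality.

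For sufficiency, I would pass to an auxiliary graph $H$ built by the classical Tutte device: each edge of $G$ is subdivided into a pair of ``half-edge'' vertices joined by an edge, and each vertex $v$ of $G$ is replaced by a small gadget attached to its $d_G(v)$ half-edge vertices so that, in any perfect matching of $H$, the number of half-edges at $v$ matched ``outward'' is forced to lie in the interval $[1,f(v)]$. With such a gadget, perfect matchings of $H$ correspond to $(1,f)$-factors of $G$, so by Theorem~\ref{thm:Tutte1-factor} it suffices to verify $odd(H\setminus X)\le |X|$ for every $X\subseteq V(H)$. The principle guiding the construction is that the hypothesis $f\ge 2$ forces the lower and upper degree bounds to differ at every vertex, which is exactly what makes the parity contributions of all nontrivial components cancel, leaving only the components arising from isolated vertices of $G\setminus S$ to be controlled. (Alternatively, one could avoid gadgets entirely and argue directly by alternating paths, augmenting a capacity-respecting subgraph until no uncovered vertex remains.)

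The main obstacle is the bookkeeping in this translation: one must check that an optimal violating set $X$ for Tutte's condition in $H$ is induced by a pair $(S,T)$ with $S\subseteq V(G)$ and $T\subseteq V(G\setminus S)$, that the most harmful choice of $T$ is precisely the isolated vertices of $G\setminus S$ (a vertex of $G\setminus S$ with positive residual degree never increases the deficiency, so it is never worth adding to $T$), and that after this optimization the inequality $odd(H\setminus X)\le |X|$ collapses exactly to $iso(G\setminus S)\le \sum_{v\in S}f(v)$. Verifying that the vertex gadget enforces the interval $[1,f(v)]$ without creating spurious odd components, and that the condition $f\ge 2$ guarantees no component of $G\setminus(S\cup T)$ contributes a parity correction, is where the real work lies and is what reduces the general matching criterion to the clean isolated-vertex form in the statement.
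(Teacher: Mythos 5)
The paper states this result as a citation to Las Vergnas and gives no proof of its own, so there is nothing internal to compare against; judging your proposal on its merits, the necessity half is correct and complete: each isolated vertex of $G\setminus S$ sends at least one $F$-edge into $S$, and $S$ can absorb at most $\sum_{v\in S}f(v)$ such edge-ends. The sufficiency half, however, is a plan rather than a proof. The entire content of that direction lives in the two steps you explicitly defer: (i) designing a vertex gadget that forces the outward degree into the interval $[1,f(v)]$ rather than to an exact value (the classical Tutte gadget $K_{d(v),\,d(v)-f(v)}$ enforces exact degree $f(v)$; an interval constraint requires extra slack vertices to absorb parity, and it is not obvious that this can be done ``without creating spurious odd components''), and (ii) showing that an optimal violating set $X$ in $H$ collapses to a pair $(S,T)$ with $T=I(G\setminus S)$ and that the parity corrections vanish. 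You assert that $f\ge 2$ makes the parity contributions cancel, but this is precisely the claim that needs proof, and nothing in the proposal establishes it.

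A much shorter route is available using machinery the paper already quotes, namely Lemma~\ref{lem:Lovasz} (Lov\'asz's $(g,f)$-factor criterion) with $g\equiv 1$. Since $g(v)=1<2\le f(v)$ for every $v$, no component can satisfy $f=g$ throughout, so $\omega_{g,f}(G,A,B)=0$ for all disjoint $A,B$, and the criterion reduces to
\[
0\le \sum_{v\in A}f(v)+\sum_{v\in B}\bigl(d_{G\setminus A}(v)-1\bigr).
\]
The only negative terms on the right come from vertices $v\in B$ with $d_{G\setminus A}(v)=0$, i.e.\ isolated vertices of $G\setminus A$, so the worst choice of $B$ is $B=I(G\setminus A)$ and the condition is exactly $iso(G\setminus A)\le\sum_{v\in A}f(v)$. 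This is the same reduction the paper itself performs (with general $g<f$) in its theorem generalizing Theorems~2 and~3 of~\cite{MR2364755}. I would recommend replacing your gadget argument by this two-line deduction, or else actually constructing and verifying the gadget.
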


In 1985 Enomoto, Jackson, Katerinis, and Saito proved the following theorem on tough graphs, 
which was originally conjectured by Chv\'atal (1973)~\cite{Chvatal-1973}. In 1990 Katerinis~\cite{Katerinis-1990} generalized their result by replacing a weaker sufficient toughness condition for the existence of $[a,b]$-factors, provided that $a > b$.
\begin{thm}{\rm (\cite{Enomoto-Jackson-Katerinis-Saito-1985})}\label{thm:tough:r-factor}
Every $k$-tough graph $G$ of order at least $k+1$
with $k|V(G)|$ even has a $k$-factor.
\end{thm}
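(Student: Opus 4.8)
\emph{Proof sketch.} The natural tool here is Tutte's $f$-factor theorem (the $f$-factor extension of Theorem~\ref{thm:Tutte1-factor}), applied with the constant function $f\equiv r$. It asserts that $G$ has an $r$-factor if and only if for every pair of disjoint vertex sets $S,T\subseteq V(G)$ the deficiency
\[
\delta(S,T):=r|S|+\sum_{v\in T}\bigl(d_{G-S}(v)-r\bigr)-\omega_{f}(G,S,T)
\]
is nonnegative, where $\omega_{f}(G,S,T)$ counts the components $C$ of $G\setminus(S\cup T)$ with $r|V(C)|+d_G(C,T)$ odd. A routine parity computation shows that $\delta(S,T)\equiv r|V(G)|\pmod 2$ for \emph{every} pair $(S,T)$; since $r|V(G)|$ is even by hypothesis, every deficiency is even, so if $G$ had no $r$-factor there would necessarily be a pair with $\delta(S,T)\le -2$.

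First I would assume, for contradiction, that no $r$-factor exists and fix a pair $(S,T)$ with $\delta(S,T)\le-2$ for which $|S|+|T|$ is as small as possible. Before exploiting minimality I would record a cheap but crucial consequence of toughness: since $G$ is $r$-tough and $|V(G)|\ge r+1$, deleting the neighbourhood of any vertex together with the toughness inequality forces $G$ to have minimum degree at least $r$. The standard analysis of such an extremal pair then supplies degree constraints on $T$, the key one being $d_{G-S}(v)\le r-1$ for each $v\in T$; combined with the minimum-degree bound this immediately disposes of the degenerate situations (for instance $S=\emptyset$ forces $T=\emptyset$, and the empty pair has deficiency $0$), so in a genuine counterexample both $S$ and $T$ are nonempty.

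The heart of the argument is to bound $\omega_f(G,S,T)$ from above by toughness and to bound $\sum_{v\in T}d_{G-S}(v)$ suitably, tightly enough to force $\delta(S,T)\ge 0$. Writing $A=S\cup T$ and $c=\omega(G\setminus A)$, $r$-toughness gives $rc\le|A|=|S|+|T|$ whenever $c\ge 2$ (the cases $c\le 1$ being easy). I would then split the components of $G\setminus A$ according to whether they send an edge to $T$: those that do number at most $\sum_{v\in T}d_{G-S}(v)$, while those that do not are components of $G\setminus S$ and hence are controlled by $r$-toughness applied to $S$ alone, namely by $\max\{1,|S|/r\}$. Feeding these two estimates, the degree bound $d_{G-S}(v)\le r-1$, and the minimum-degree information into the deficiency formula is intended to yield the contradiction $\delta(S,T)\ge 0$.

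The main obstacle is precisely this final counting step. The crude bounds $\omega_f\le c$ and $\sum_{v\in T}d_{G-S}(v)\ge 0$ each lose a multiplicative factor and leave an uncontrolled $-r|T|$ term, so the contributions of $S$ and $T$ must be balanced against each other with considerable care rather than estimated separately. I expect that the parity restriction defining $\omega_f$ (only the \emph{odd} components are counted) and the exact value $r$ of the toughness must both be used at full strength, and it is here that the hypothesis $r|V(G)|$ even is genuinely needed, through the reduction to $\delta(S,T)\le-2$.
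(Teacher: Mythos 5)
Your setup is the right one (Tutte's $f$-factor theorem with $f\equiv r$, the parity reduction to $\delta(S,T)\le-2$, the minimum-degree consequence of toughness, and the reduction to $d_{G-S}(v)\le r-1$ on $T$), but the proof is not complete: the step you yourself flag as ``the main obstacle'' is the entire content of the theorem, and the two toughness applications you propose (to $S\cup T$ and to $S$ alone) provably cannot close the gap, for exactly the reason you identify --- nothing in them produces a term of order $r|T|$ to cancel the $-r|T|$ in the deficiency. The missing idea is to apply the toughness hypothesis to \emph{additional, carefully built} vertex sets of the form $S\cup N_G(T_i)$, where $T$ is first partitioned into roughly $r-1$ independent classes $T_1,\dots,T_{r-1}$ (possible by greedy colouring once $d_{G[T]}(v)\le r-2$). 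Deleting $S\cup N_G(T_i)$ isolates every vertex of $T_i$, so toughness yields an inequality of the shape $r|T_i|\le |S|+\sum_{v\in T_i}d_{G-S}(v)$, and summing over the $r-1$ classes gives $r|T|\le (r-1)|S|+\sum_{v\in T}d_{G-S}(v)$ --- precisely the balancing of the $S$- and $T$-contributions that your sketch says is needed but does not supply. This is the mechanism in the original proof in \cite{MR785651}, and it reappears verbatim in the paper's Theorem~\ref{thm:main:theorem}.

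For comparison, the paper does not reprove the cited theorem along your lines at all: it derives the (stronger) near-$r$-factor version as a corollary of Theorem~\ref{thm:iso:f-factor:second-method} via Corollary~\ref{cor:r-tough}, first checking that $r$-toughness implies an isolated-toughness condition together with a bound on the number of star components, and then running an independent-set weighting argument (Theorem~\ref{thm:base:independent-subset}, in the style of Caro--Wei) on $G[B]$ to control $\sum_{v\in B}(d_{G\setminus A}(v)-r)$. Either route would work, but as written your proposal stops exactly where the real argument begins.
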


In 2007 Ma and Yu strengthened Katerinis' result by replacing isolated toughness condition as the following theorem.
In this paper, we provide a supplement for their result by improving Theorem~\ref{thm:tough:r-factor} 
for $(k+1)$-iso-tough graphs by showing that the needed toughness can be pushed down to the fixed number $1$.
In Section~\ref{sec:Graphs with higher toughness}, we also establish another refined  version in $k$-iso-tough graphs.
\begin{thm}{\rm (\cite{Ma-Yu-2007})}\label{thm:intro:iso}
Every $(a-\frac{b-a}{b})$-iso-tough graph 
has an $[a, b]$-factor, when $b>a\ge 1$.
\end{thm}

In 1973 Chv\'atal~\cite{Chvatal-1973} conjectured that there exists a positive real number $t_0$ such that every $t_0$-tough graph of order at least three admits a Hamiltonian cycle. 
In 2000 Ellingham and Zha~\cite{Ellingham-Zha-2000} confirmed a weaker version of this conjecture by proving that every
$4$-tough graph of order at least three admits a connected $\{2,3\}$-factor.
Motivated by this result,  one way ask whether higher toughness can guarantee the existence of connected $\{k,k+1\}$-factors.
The following theorem shows that the answer is positive. In this paper, we show that the needed toughness of this theorem can be pushed down to the fixed number $3$ but in $(k+1)$-iso-tough graphs.
\begin{thm}{\rm (\cite{Ellingham-Nam-Voss-2002,Ellingham-Zha-2000}, see \cite{ClosedWalks})}
Every $k$-tough graph of order at least $k+1$ has a connected $\{k,k+1\}$-factor, where $k\ge 3$.
\end{thm}

In 1990 Katerinis formulated the following sufficient toughness condition for the existence of $f$-factors. In Section~\ref{sec:f-factors}, we introduce some sufficient toughness conditions for the existence of $f$-factors and connected $\{f,f+1\}$-factors in graphs with high enough isolated toughness as mentioned in the abstract. 
\begin{thm}{\rm (\cite{Katerinis-1990})}
Let $G$ be a graph and let $f$ be a positive integer-valued function on $V(G)$ satisfying $a\le f\le b$, where $a$ and $b$ are two positive integers.
If $G$ is $\frac{1}{4a}((b+a)^2+2(b-a)+1)$-tough and $\sum_{v\in V(G)}f(v)$ is even, then $G$ has an $f$-factor.
\end{thm}
\section{Tools and preliminary results}
In this section, we shall provide some necessary tools for applying in the next sections.
Before doing so, let us recall a theorem due to Tutte (1952) as the following version.
\begin{thm}{\rm (\cite{Tutte-1952})}\label{thm:base}
{Let $G$ be a general graph and let $f$ be an integer-valued function on $V(G)$. 
Then $G$ has a near $f$-factor if and only if
for all disjoint subsets $A$ and $B$ of $V(G)$,
$$\omega_f(G, A,B)\le \sum_{v\in A} f(v)+\sum_{v\in B} (d_{G\setminus A}(v)-f(v))+1.$$
}\end{thm}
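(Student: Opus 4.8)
The plan is to deduce the criterion from Tutte's $1$-factor theorem (Theorem~\ref{thm:Tutte1-factor}) through the classical vertex-splitting gadget, reading a near $f$-factor as a near-perfect matching. I would dispatch the necessity direction first by a direct parity-and-counting argument, and then invest the real effort in the sufficiency direction. For necessity, fix disjoint $A,B\subseteq V(G)$ and a near $f$-factor $F$, so $d_F(v)=f(v)$ for every vertex except possibly one vertex $u$ with $d_F(u)=f(u)+1$. Every component $C$ of $G\setminus A\cup B$ counted by $\omega_f(G,A,B)$ sends all its $F$-edges outside $C$ into $A\cup B$, and reducing $\sum_{v\in V(C)}d_F(v)=2|E(F[C])|+d_F(C,A)+d_F(C,B)$ modulo $2$, together with the defining parity $\sum_{v\in V(C)}f(v)\not\equiv d_G(C,B)\pmod 2$, forces each such $C$ to spend at least one $F$-edge into $A\cup B$ beyond the generic degree budget. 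Summing these forced edges over all counted components, while bounding the edge-ends available in $A$ by $\sum_{v\in A}f(v)$ and the free edge-slots at $B$ by $\sum_{v\in B}(d_{G\setminus A}(v)-f(v))$, yields the stated inequality, the additive $1$ absorbing the single exceptional vertex $u$ and the extra unit in the odd case reflecting a global parity constraint.

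For sufficiency I would build the auxiliary graph $H$ by replacing each vertex $v$ with $d_G(v)$ edge-vertices (one per incident edge) and a slack set $D_v$ of $d_G(v)-f(v)$ vertices, joining $D_v$ completely to the edge-vertices at $v$ and joining the two edge-vertices arising from each edge of $G$. Perfect matchings of $H$ correspond exactly to $f$-factors of $G$, while a matching of $H$ that misses a single slack vertex of some $D_{v_0}$ corresponds to a spanning subgraph with $d_F(v_0)=f(v_0)+1$ and $d_F=f$ elsewhere, that is, a genuine near $f$-factor. To make exactly one unit of slack optional at an arbitrary vertex I would attach one universal vertex $w$ adjacent to every slack vertex, so that perfect matchings of the augmented gadget realize the single admissible excess. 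Since $|V(H)|=4|E(G)|-\sum_v f(v)$, the vertex count of the gadget inherits the parity of $\sum_v f(v)$, which is precisely what makes the ``near'' relaxation coincide with an $f$-factor when this sum is even and demand one forced excess when it is odd.

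I would then apply Theorem~\ref{thm:Tutte1-factor}, in its deficiency formulation, to the gadget: if no perfect (respectively near-perfect) matching exists, there is a Tutte set $S$ with $odd(H\setminus S)$ exceeding $|S|$. The crux, and the step I expect to be the main obstacle, is to normalize $S$ into a canonical shape --- for each $v$ either all of $D_v$ or none of its edge-vertices lies in $S$ --- and then to read off the witnesses $A$ and $B$ in $G$, verifying that the odd components of $H\setminus S$ are exactly the components counted by $\omega_f(G,A,B)$, with the parity of $\sum_{v\in V(C)}f(v)+d_G(C,B)$ governing oddness in the gadget. Re-expressing $|S|$ and the odd-component count through $\sum_{v\in A}f(v)$, $\sum_{v\in B}(d_{G\setminus A}(v)-f(v))$, and $\omega_f(G,A,B)$ converts the violated Tutte inequality on $H$ into a violation of the claimed bound on $G$, a contradiction; the parity refinement for odd $\sum_v f(v)$ falls out because the Berge--Tutte deficiency shares the parity of $|V(H)|$ and therefore jumps by two, supplying the extra unit in the bound. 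The delicate bookkeeping will be matching the parity defect defining $\omega_f$ with oddness in the gadget and correctly tracking the additive constants through the normalization of $S$.
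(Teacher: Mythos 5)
This theorem is not proved in the paper at all: it is quoted as a known result of Tutte (1952), restated in ``near $f$-factor'' form, so there is no internal proof to compare your attempt against. Judged on its own, your plan follows the classical route --- reduce to Theorem~\ref{thm:Tutte1-factor} via the vertex-splitting gadget --- which is essentially Tutte's original derivation of the $f$-factor theorem and is a viable way to get the near-$f$-factor version. Your gadget accounting is right ($|V(H)|=4|E(G)|-\sum_v f(v)$, slack sets $D_v$ of size $d_G(v)-f(v)$, the universal vertex $w$ attached to all slack vertices used only when $\sum_v f(v)$ is odd, since a near $f$-factor has exactly one excess vertex precisely in that case and none otherwise), and your necessity sketch correctly identifies that each component counted by $\omega_f$ forces either an $F$-edge into $A$ or an unused $G$-edge into $B$, with the single exceptional vertex absorbed by the additive $1$ whether it lies in $A$, in $B$, or in one of the components.

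The gap is the one you yourself flag: the entire sufficiency direction rests on normalizing the Tutte set $S$ in the gadget (for each $v$, either all of $D_v$ and none of the edge-vertices at $v$, or the reverse), reading off $A$ and $B$, and verifying that the odd components of $H\setminus S$ are exactly the components with $\sum_{v\in V(C)}f(v)\not\equiv d_G(C,B)\pmod 2$ while the additive constants survive the translation. That computation is the theorem; deferring it leaves the argument a plan rather than a proof, and it is exactly where sign and parity errors classically creep in. If you are willing to quote the $f$-factor theorem itself (rather than only the $1$-factor theorem), there is a much shorter route to this particular statement: when $\sum_v f(v)$ is odd, adjoin one new vertex $z$ joined to every vertex of $G$ with $f(z)=1$, apply the $f$-factor criterion to $G+z$, and translate the conditions for pairs $(A,B)$ in $G+z$ back to $G$; the case analysis over whether $z\in A$, $z\in B$, or $z\notin A\cup B$ produces the $+1$ and, via the standard parity of the deficiency $\delta(A,B)\equiv\sum_v f(v)\pmod 2$, the $+2$ of the ``moreover'' clause. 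That avoids re-deriving the gadget bookkeeping from scratch.
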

The following corollary is an application of Theorem~\ref{thm:base}, which is inspired by Lemma 4 in~\cite{Katerinis-1990}.
\begin{cor}{\rm (see \cite{Katerinis-1990})}\label{cor:main}
{Let $G$ be a general graph and let $f$ be an integer-valued function on $V(G)$. 
Then $G$ has a near $f$-factor, if 
$$\omega (G\setminus (A\cup B))\le \sum_{v\in A} f(v)+\sum_{v\in B} (d_{G\setminus A}(v)-f(v))+1,$$
for all disjoint subsets $A$ and $B$ of $V(G)$ satisfying 
$d_{G [B]}(u) \le f(u)-2$ and $d_{G\setminus A}(u) \le 2f(u)-1$ for each $u\in B$.
}\end{cor}
\begin{proof}
{Let us define $g=f$. We are going to show that the inequality holds for any two disjoint subsets $A$ and $B$ of $V(G)$ and so the proof follows from Theorem~\ref{thm:base}. 
 By induction on $|B|$.
Let $q(A,B)$ be the right-hand side of the inequality in the corollary.
Assume that $B$ has a vertex $u$ with 
$ d_{G[B]}(u)\ge g(u)-1$ 
or 
$d_{G\setminus A}(u)\ge f(u)+g(u)$.
Define $B_u=B\setminus \{u\}$. 
If $d_{G[B]}(u) \ge g(u)-1$, then
$$\omega(G\setminus (A\cup B))\le \omega(G\setminus (A\cup B_u))+d-1 \le 
q(A,B_u)+d-1 =q(A,B)-d_{G[B]}(u)+g(u)-1 \le
 q(A,B).$$
where $d$ denotes the number of edges of $G$ incident to $u$ with the other end in $V(G)\setminus (A\cup B)$.
Also, if $d_{G\setminus A}(u) \ge f(u)+g(u)$, then 
$$\omega(G\setminus (A\cup B))=
 \omega(G\setminus (A_u\cup B_u)) \le 
q(A_u,B_u)= 
q(A,B)-d_{G\setminus A}(u) +f(u)+g(u) \le q(A,B),$$
where $A_u=A\cup \{u\}$. Hence the lemma holds. 
}\end{proof}
The following theorem can generalize Lemma~1 in~\cite{Katerinis-1990} and plays an essential role in this paper.
\begin{thm}\label{thm:base:independent-subset}
{Let $H$ be a graph. If $\varphi$ is a nonnegative real function on $V(H)$,
 then there is a maximal independent subset $I$ of $V(H)$ such that 
$$
\sum_{v\in V(H)} \varphi(v)\le \sum_{v\in I}\varphi(v) (d_H(v)+1) .
$$
}\end{thm}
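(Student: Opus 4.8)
The plan is to construct the desired set greedily, always extracting the heaviest available vertex together with its closed neighbourhood, and then to bound the total weight by a charging argument. The motivating observation is the unweighted case: any maximal independent set $I$ is dominating, so $V(H)=\bigcup_{v\in I}N_H[v]$ and hence $|V(H)|\le\sum_{v\in I}|N_H[v]|=\sum_{v\in I}(d_H(v)+1)$. For the weighted statement a plain maximal independent set is not enough, because when one charges the weight $\varphi(u)$ of a dominated vertex $u$ to its neighbour $v\in I$, the available budget per slot is $\varphi(v)$, which may be smaller than $\varphi(u)$. To repair this I would force the chosen vertices to be the heavy ones.

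Concretely, I would run the following procedure. Set $H_0=H$; while $H_i$ is nonempty, choose a vertex $v_i$ of $H_i$ maximizing $\varphi$, put $v_i$ into $I$, and delete from $H_i$ the closed neighbourhood $N_{H_i}[v_i]$ (that is, $v_i$ together with its neighbours in $H_i$) to obtain $H_{i+1}$. The procedure terminates, the set $I=\{v_1,v_2,\dots\}$ is independent (each $v_{i+1}$ survives into $H_{i+1}$, from which all neighbours of $v_i$ have already been removed), and the deleted closed neighbourhoods $N_{H_i}[v_i]$ partition $V(H)$, since every vertex is removed at exactly one step.

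The charging is then immediate. Fix a step $i$. Every vertex $u\in N_{H_i}[v_i]$ belongs to $H_i$, so by the greedy choice $\varphi(u)\le\varphi(v_i)$; moreover the neighbours of $v_i$ in $H_i$ form a subset of its neighbours in $H$, so $|N_{H_i}[v_i]|\le d_H(v_i)+1$. Using $\varphi(v_i)\ge 0$, these two facts give
$$\sum_{u\in N_{H_i}[v_i]}\varphi(u)\le \varphi(v_i)\,|N_{H_i}[v_i]|\le \varphi(v_i)(d_H(v_i)+1).$$
Summing over all steps and using that the sets $N_{H_i}[v_i]$ partition $V(H)$ yields
$$\sum_{v\in V(H)}\varphi(v)=\sum_i\sum_{u\in N_{H_i}[v_i]}\varphi(u)\le\sum_i\varphi(v_i)(d_H(v_i)+1)=\sum_{v\in I}\varphi(v)(d_H(v)+1),$$
which is exactly the claimed inequality.

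I do not expect a serious obstacle here; the only points that require care are using the original-graph degree $d_H(v_i)$ (rather than the current subgraph degree) as the valid upper bound for $|N_{H_i}[v_i]|$, and invoking the nonnegativity of $\varphi$ precisely at the step where the neighbourhood size is replaced by $d_H(v_i)+1$. An alternative route would be a straightforward induction on $|V(H)|$ — remove a vertex of maximum $\varphi$ together with its neighbours, apply the inductive hypothesis to the remainder, and add back the contribution of the removed piece — which unwinds to the same greedy charging computation.
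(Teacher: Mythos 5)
Your proposal is correct and is essentially the paper's own proof: the same greedy extraction of a $\varphi$-maximum vertex together with its closed neighbourhood, the same partition of $V(H)$ into these closed neighbourhoods, and the same charging bound using $\varphi(u)\le\varphi(v_i)$ and $|N_{H_i}[v_i]|\le d_H(v_i)+1$. No further comment is needed.
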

\begin{proof}
{Define $H_0=H$. 
For every nonnegative integer $i$ with $|V(H_i)|\ge 1$, take $u_i$ to be a vertex of $H_i$ with the maximum $\varphi (u_i)$ and
 set $H_{i+1}=H_{i}\setminus (N(u_i)\cup \{u_i\})$, where $N(u_i)$ denotes the set of all neighbours of $u_i$ in $H_i$.
Define $I$ to be the set of all selected vertices $u_i$.
It is not hard to check that $I$ is a maximal independent set of $H$ and 
$\{ N(u) \cup \{u\}: u\in I \}$
is a partition of $V(H)$. Since $\varphi (u)\ge 0$, 
$$
\sum_{v\in V(H)\setminus I}\varphi(v) = 
\sum_{u\in I}\sum_{v\in N(u)} \varphi(v) \le 
\sum_{u\in I} \varphi(u)d_{H}(u).
$$
This inequality can complete the proof. 
}\end{proof}
\begin{cor}\label{lem:independence}\label{lem:optimized}{\rm (\cite{Caro-1979,Wei-1981})}
{For every graph $H$, we have 
$\alpha(H)\ge \sum_{v\in V(H)}\frac{1}{1+d_H(v)}.$
}\end{cor}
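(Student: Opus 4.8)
The plan is to derive this directly from Theorem~\ref{thm:base:independent-subset} by choosing the weight function that trivializes the multiplier $d_H(v)+1$ appearing on its right-hand side. Concretely, I would apply that theorem with $\varphi(v)=\frac{1}{1+d_H(v)}$ for every vertex $v$. This is a well-defined nonnegative real function on $V(H)$, since $d_H(v)\ge 0$ forces $1+d_H(v)\ge 1>0$, so the hypotheses of the theorem are satisfied.

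With this choice the product $\varphi(v)(d_H(v)+1)$ collapses to $1$ for each $v$. Hence the right-hand side of the inequality furnished by the theorem equals $\sum_{v\in I}1=|I|$, where $I$ is the independent set the theorem produces, while the left-hand side is by definition $\sum_{v\in V(H)}\frac{1}{1+d_H(v)}$. The theorem therefore yields an independent set $I$ satisfying $\sum_{v\in V(H)}\frac{1}{1+d_H(v)}\le |I|$.

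To finish, I would invoke the definition of the independence number: since $I$ is independent, $|I|\le \alpha(H)$. Chaining the two inequalities gives $\sum_{v\in V(H)}\frac{1}{1+d_H(v)}\le |I|\le \alpha(H)$, which is exactly the claimed Caro--Wei bound.

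As for the difficulty, there is essentially no obstacle remaining at this stage, because all of the combinatorial content has already been absorbed into the greedy partition argument of Theorem~\ref{thm:base:independent-subset}. The only point requiring a moment's thought is recognizing that $\frac{1}{1+d_H(v)}$ is precisely the weighting that normalizes the multiplier to unity, thereby converting a weighted inequality into a plain cardinality bound on an independent set; once that substitution is spotted, the corollary is immediate.
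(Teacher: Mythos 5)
Your proof is correct and matches the paper's own argument exactly: the paper also applies Theorem~\ref{thm:base:independent-subset} with $\varphi(v)=1/(1+d_H(v))$, which makes each term $\varphi(v)(d_H(v)+1)$ equal to $1$ and reduces the right-hand side to $|I|\le\alpha(H)$. Nothing further is needed.
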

\begin{proof}
{Apply Theorem~\ref{thm:base:independent-subset} with $\varphi(v) = 1/(1+d_H(v))$.
}\end{proof}
The following corollary provides an equivalent version for Theorem~\ref{thm:base:independent-subset}.
\begin{cor}\label{cor:base}
{Let $H$ be a graph and let $\varphi$ and $d$ be two real functions on $V(H)$. 
If for each $v\in V(H)$, $\varphi (v) \ge d(v) \ge d_H(v)$, then there is a maximal independent subset $I$ of $V(H)$ such that 
$$
\sum_{v\in V(H)} (\varphi(v)-d(v)) \le \sum_{v\in I}(d(v)+1) (\varphi(v)-d(v)).
$$}\end{cor}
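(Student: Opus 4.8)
The plan is to reduce the statement directly to Theorem~\ref{thm:base:independent-subset} by feeding it the right nonnegative function. First I would set $\psi(v) = \varphi(v) - d(v)$ for every $v \in V(H)$. The hypothesis $\varphi(v) \ge d(v)$ guarantees $\psi(v) \ge 0$, so $\psi$ is a nonnegative real function on $V(H)$ and the theorem applies to the same graph $H$.

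Applying Theorem~\ref{thm:base:independent-subset} to $\psi$ produces an independent subset $I \subseteq V(H)$ with
$$\sum_{v \in V(H)} \psi(v) \le \sum_{v \in I} \psi(v)\,(d_H(v)+1).$$
The remaining step is to weaken the right-hand side using the second hypothesis $d(v) \ge d_H(v)$. Since $\psi(v) \ge 0$ and $d(v)+1 \ge d_H(v)+1$, each summand over $I$ satisfies $\psi(v)(d_H(v)+1) \le \psi(v)(d(v)+1)$; summing over $v \in I$ and substituting back $\psi = \varphi - d$ yields exactly the desired inequality.

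There is essentially no obstacle here beyond bookkeeping: the whole content is the choice $\psi = \varphi - d$, and the only point requiring care is that the nonnegativity $\psi \ge 0$ is precisely what licenses replacing the coefficient $d_H(v)+1$ by the larger coefficient $d(v)+1$ without reversing the inequality. To justify the claim that this corollary is an equivalent reformulation, I would note the converse is just as immediate: given a nonnegative $\varphi$ as in the theorem, applying the corollary with $d = d_H$ and the function $\varphi + d_H$ in place of $\varphi$ (so that both hypotheses $\varphi + d_H \ge d_H \ge d_H$ hold) recovers $\sum_{v \in V(H)} \varphi(v) \le \sum_{v \in I} \varphi(v)(d_H(v)+1)$.
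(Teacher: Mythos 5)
Your proof is correct and follows the same route as the paper: apply Theorem~\ref{thm:base:independent-subset} to the nonnegative function $\varphi - d$, then use $d(v)\ge d_H(v)$ and nonnegativity to enlarge the coefficient from $d_H(v)+1$ to $d(v)+1$. You simply spell out the coefficient-enlargement step (and the converse) that the paper's one-line proof leaves implicit.
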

\begin{proof}
{Apply Theorem~\ref{thm:base:independent-subset} with replacing $(\varphi(v)-d(v))$ instead of $\varphi(v)$.
}\end{proof}

\section{Isolated toughness and the existence of $\{f,f+1\}$-factors}
Our aim in this section is to generalize Theorem~\ref{thm:intro:iso} by giving isolated toughness conditions for existence of $(g,f)$-factors, provided that $g<f$.
For this purpose, we need the following lemma due to Lov\'{a}sz (1970).
\begin{lem}{\rm (\cite{Lovasz-1970})}\label{lem:Lovasz}
{Let $G$ be a graph and let $g$ and $f$ be two integer-valued functions on $V(G)$ with $g< f$. 
Then $G$ has a $(g,f)$-factor, if and only if 
for all disjoint subsets $A$ and $B$ of $V(G)$,
$$0\le \sum_{v\in A} f(v)+\sum_{v\in B} (d_{G\setminus A}(v)-g(v)).$$
}\end{lem}
The following theorem provides a common generalization for both of Theorems~2 and~3 in~\cite{Ma-Yu-2007}.
\begin{thm}\label{thm:(g,f)-factor}
{Let $G$ be a graph and let $g$ and $f$ be two nonnegative integer-valued functions on $V(G)$ with $g<f$.
Let $a$ be positive real number with $a\le f$.
Then  $G$ has a  $(g,f)$-factor, if it $t$-iso-tough, where for each vertex $v$,
$$t(v)= 
 \begin{cases}
g(v)(1+\frac{1}{a})-1,	&\text{when $g(v)\le a+2$};\\ 
\frac{1}{4a}((g+a+1)^2-\epsilon_0(v))-1,	&\text{otherwise},
\end {cases}$$
 in which $\epsilon_0(v)\in \{0,1\}$ such that $\epsilon_0(v)=1$ 
if and only if $g(v)$ and  $a$ are integers with the same parity.
}\end{thm}
\begin{proof}
{Let $A$ and $B$ be two disjoint subsets of $V(G)$.
In order to apply Lemma~\ref{lem:Lovasz}, we should prove the inequality 
$0\le \sum_{v\in A} f(v) +\sum_{v\in B}(d(v)-g(v))$, where $d(v)=d_{G\setminus A}(v)$.
For this purpose, we may assume that for each $v\in B$, $d(v)\le g(v)-1$. 
By applying Corollary~\ref{cor:base} with $\varphi=g$, the graph $G[B]$ has an independent set $I$ such that 
\begin{equation*}
\sum_{v\in B} (g(v)-d(v)) \le 
 \sum_{v\in I}(d(v)+1) (g(v)-d(v)).
\end{equation*}
Since $G$ is $t$-iso-tough, we have 
$\sum_{v\in I}t(v)\le |A\cup N_G(I)| \le |A|+\sum_{v\in I}d(v)$.
Since $d(v)$ is integer, we must have 
$$(d(v)+1) \big(g(v)-d(v)\big)+a\, d(v)=(d(v)+1) \big(g(v)+a-d(v)\big) -a\le a\, t(v),$$
regardless of $g(v)-1\le (g(v)+a)/2$ or not.
This implies that 
\begin{equation*}
\sum_{v\in B} (g(v)-d(v)) \le \sum_{v\in I}(d(v)+1) \big(g(v)-d(v)\big)\le 
\sum_{v\in I}\big(a \, t(v) -a\, d(v)\big) \le a |A|.
\end{equation*}
Therefore, 
\begin{equation*}
0 \le a|A|+\sum_{v\in B}(d(v)-g(v)) \le \sum_{v\in A} f(v)+\sum_{v\in B}(d(v)-g(v)).
\end{equation*}
Hence the assertion follows from Lemma~\ref{lem:Lovasz}.
}\end{proof}
When we consider the special case $\max g< \min f$, the theorem becomes simpler as the following result.
\begin{cor}
{Let $G$ be a graph and let $g$ and $f$ be two nonnegative integer-valued functions on $V(G)$ with $\max g< \min f$.
If $G$ is $(g-1+\frac{g}{\min f})$-iso-tough,
then it has a $(g,f)$-factor.
}\end{cor}
\begin{proof}
{Apply Theorem~\ref{thm:(g,f)-factor} with $a=\min f$.
}\end{proof}
\begin{cor}
{Every $f(f+1)$-iso-tough graph $G$ has an $\{f,f+1\}$-factor, where $f$ is a nonnegative integer-valued function on $V(G)$.
}\end{cor}
\begin{proof}
{Apply Theorem~\ref{thm:(g,f)-factor} with setting and $g'=f$, $f'=f+1$, and $a=1$.
}\end{proof}
\section{Toughness, isolated toughness, and the existence of $f$-factors}
\label{sec:f-factors}
In this section, we are going to present some sufficient toughness conditions for the existence of $f$-factors in graphs with high enough isolated toughness. 
\subsection{Regular factors in $1$-tough graphs}
The following theorem significantly improves
 the needed toughness in Theorem~\ref{thm:tough:r-factor}  in graphs with a bit higher isolated toughness.
\begin{thm}\label{thm:main:theorem}
{Let $k$ be a positive integer and let $n$ be a real number with $n\ge 1$. 
If  $G$ is  a $(k+1/n)$-iso-tough graph and 
for all $S\subseteq V(G)$, $$\omega(G\setminus S)< \frac{1}{n}|S|+2,$$
then $G$ has a near $k$-factor.
}\end{thm}
\begin{proof}
{Let $A$ and $B$ be two disjoint subsets of $V(G)$ such that for each $v\in B$, $d_{G[B]}(v)\le k-2$.
By applying a greedy coloring, one can decompose $B$ into $k-1$ independent vertex sets $B_1,\ldots, B_{k-1}$.
Let $S_i=A\cup N_G(B_i)$. 
By the assumption, we must have 
$$(k+\frac{1}{n})
 |B_i| \le
 (k+\frac{1}{n})
\; iso(G\setminus S_i) \le |S_i|\le |A|+ \sum_{v\in B_i}d_{G\setminus A}(v),$$
which implies that
$$(k+\frac{1}{n})|B|=
\sum_{1\le i < k}
(k+\frac{1}{n})
|B_i| \le 
(k-1)|A|+\sum_{1\le i < k} \sum_{v\in B_i}d_{G\setminus A}(v) = 
(k-1)|A|+\sum_{v\in B}d_{G\setminus A}(v).$$
By the assumption, we must also have 
$$\omega(G\setminus A\cup B) < 
%k
\frac{1}{n}
(|A|+|B|)+2.$$
 Therefore, 
$$\omega(G\setminus (A\cup B)) < 
(k-1+\frac{1}{n}
)
|A|+\sum_{v\in B} (d_{G\setminus A}(v)-k)+2\le
k|A|+\sum_{v\in B} (d_{G\setminus A}(v)-k)+2.$$
Thus the assertion follows from Corollary~\ref{cor:main}. 
}\end{proof}
\begin{remark}
{Note that when $G$ has no complete subgraphs of order $k-1$ and $k\ge 5$,  independent sets $B_i$ could be chosen such that $B_{k-1}=\emptyset$ using Brooks' Theorem~\cite{Brooks-1941}. This fact allows us to reduce the lower bound on $n$ to $1/2$.
}\end{remark}
\subsection{Graphs with toughness less than $1$}
The following theorem gives a sufficient toughness condition for the existence of  $f$-factors.
\begin{thm}\label{thm:iso:f-factor}
{Let $\epsilon$ be a real number with $0<\epsilon \le 1$. 
Let $G$ be a graph and let $f$ be  a positive integer-valued function on $V(G)$.
If $G$ is $f(f+1)/\epsilon$-iso-tough and for all $S\subseteq V(G)$, 
$$\omega(G\setminus S)< \sum_{v\in S}(f(v)-\epsilon)+2,$$
then $G$ has a near $f$-factor.
}\end{thm}
\begin{proof}
{Let $A$ and $B$ be two disjoint subsets of $V(G)$.
We may assume that $d(v)\le 2f(v)-1$ for each $v\in B$, where $d(v)= d_{G\setminus A}(v)$.
For each $v\in B$, define $\varphi(v) =2f(v)-\epsilon$ so that $\varphi(v)\ge d(v)$.
By Corollary~\ref{cor:base}, the graph $G[B]$ has an independent set $I$ such that 
\begin{equation*}
\sum_{v\in B} (\varphi(v)-d(v)) \le 
 \sum_{v\in I}(d(v)+1) (\varphi(v)-d(v)).
\end{equation*}
For each vertex $v$, define $t(v)=f(v)(f(v)+1)/\epsilon-1$.
Since $G$ is $t$-iso-tough, we have 
$\sum_{v\in I}t(v)\le |A\cup N_G(I)| \le |A|+\sum_{v\in I}d(v)$.
Since $d(v)$ is integer, we must have 
$$(d(v)+1) \big(\varphi(v)-d(v)\big) +\epsilon\, d(v)=
(d(v)+1) \big(2f(v)-d(v)\big)-\epsilon \le
 \epsilon \, t(v),$$
which implies that 
\begin{equation*}
\sum_{v\in B} (\varphi(v)-d(v)) \le \sum_{v\in I}(d(v)+1) \big(\varphi(v)-d(v)\big)\le 
\sum_{v\in I}\big(\epsilon \, t(v) -\epsilon d(v)\big) \le \epsilon |A|.
\end{equation*}
On the other hand, by the assumption, 
$$\omega(G\setminus (A\cup B)) < \sum_{v\in A\cup B}(f(v)-\epsilon)+2=\sum_{v\in A}(f(v)-\epsilon)+\sum_{v\in B}(f(v)-\epsilon)+2.$$
Therefore, 
\begin{equation*}
\omega(G\setminus (A\cup B)) \le \sum_{v\in A} f(v)+\sum_{v\in B}(d(v)-f(v))+1.
\end{equation*}
Hence the assertion follows from Corollary~\ref{cor:main}.
}\end{proof}
\begin{cor}\label{cor:iso:epsilon:f-1}
{Let $G$ be a graph and let $f$ be a positive integer-valued function on $V(G)$. If $G$ is $f(f+1)$-iso-tough and
 for all $S\subseteq V(G)$, 
$$\omega(G\setminus S)\le \sum_{v\in S}(f(v)-1)+1,$$
then $G$ has a near $f$-factor.
}\end{cor}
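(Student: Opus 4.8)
The plan is to derive this directly from Theorem~\ref{thm:iso:f-factor} by specializing to the boundary case $\epsilon = 1$. With this choice the iso-toughness hypothesis of that theorem, namely that $G$ be $f(f+1)/\epsilon$-iso-tough, becomes precisely the hypothesis assumed here, that $G$ be $f(f+1)$-iso-tough. So the only thing left to verify is that the component-count condition assumed in the corollary implies the one demanded by the theorem when $\epsilon = 1$.

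First I would write out the theorem's condition at $\epsilon = 1$: it requires that for all $S \subseteq V(G)$,
$$\omega(G\setminus S) < \sum_{v\in S}(f(v)-1) + 2.$$
The corollary instead assumes $\omega(G\setminus S) \le \sum_{v\in S}(f(v)-1)+1$. The key observation is simply one of integrality: both $\omega(G\setminus S)$, which counts components, and $\sum_{v\in S}(f(v)-1)$, which is a sum of integers since $f$ is integer-valued, are integers. For integers $a$ and $b$ the inequalities $a \le b+1$ and $a < b+2$ are literally equivalent, so the two conditions coincide. Hence the full hypotheses of Theorem~\ref{thm:iso:f-factor} hold with $\epsilon = 1$, and its conclusion, that $G$ admits a near $f$-factor, follows at once.

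There is no genuine obstacle here; the corollary is just the integer shadow of the $\epsilon$-parametrized theorem. The only point that warrants care is the bookkeeping of strict-versus-weak inequality together with the integrality of the right-hand side. Indeed, this is exactly why Theorem~\ref{thm:iso:f-factor} was phrased with a strict inequality and a $+2$ rather than a weak inequality and a $+1$: that phrasing is what allows the $\epsilon$-perturbed form to collapse cleanly onto the clean integer statement in the limiting case $\epsilon = 1$.
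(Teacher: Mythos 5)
Your proposal is correct and matches the paper's (implicit) proof exactly: the corollary is stated without a separate argument precisely because it is the $\epsilon=1$ specialization of Theorem~\ref{thm:iso:f-factor}, with the strict inequality $\omega(G\setminus S)<\sum_{v\in S}(f(v)-1)+2$ collapsing to the weak inequality $\omega(G\setminus S)\le\sum_{v\in S}(f(v)-1)+1$ by integrality of both sides. Your observation about why the theorem is phrased with a strict inequality and a $+2$ is exactly the right reading of the situation.
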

\begin{proof}
{Apply Theorem~\ref{thm:iso:f-factor} with $\epsilon=1$.
}\end{proof}
The isolated toughness needed in Corollary~\ref{cor:iso:epsilon:f-1} can be improved by a coefficient for graphs with higher toughness as the next theorem, provided that $\min f$ is sufficiently large.
\begin{thm}\label{thm:toughness:less:one}
{Let $G$ be a graph and let $f$ be a positive integer-valued function on $V(G)$, and
let $a$ be a positive real number with $f\ge a\ge 1$.
If $G$ is $\frac{1}{a}(f+a/2)^2$-iso-tough and 
 for all $S\subseteq V(G)$, 
$$\omega(G\setminus S)< \sum_{v\in S}(f(v)-a)+2,$$
then $G$ has a near $f$-factor.
}\end{thm}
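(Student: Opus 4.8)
The plan is to imitate the proof of Theorem~\ref{thm:iso:f-factor} and reduce the whole statement to the criterion of Corollary~\ref{cor:main}. First I would fix two disjoint sets $A,B\subseteq V(G)$ and, by Corollary~\ref{cor:main}, assume that $d_{G[B]}(v)\le f(v)-2$ and $d(v)\le 2f(v)-1$ hold for every $v\in B$, writing $d(v):=d_{G\setminus A}(v)$. A direct rearrangement then shows that the inequality required by Corollary~\ref{cor:main} follows as soon as one establishes the single degree estimate
$$\sum_{v\in B}\bigl(2f(v)-a-d(v)\bigr)\le a\,|A|;$$
indeed, feeding this into the strict component hypothesis gives $\omega(G\setminus A\cup B)<\sum_{v\in A}f(v)+\sum_{v\in B}(d(v)-f(v))+2$, and since $\omega$, $f$ and $d$ are integer-valued this is the same as the desired non-strict bound. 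So the entire task is to prove the displayed degree estimate.

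To this end I would apply Corollary~\ref{cor:base} with the weight $\varphi(v)=2f(v)-a$, for which $\varphi(v)-d(v)=2f(v)-a-d(v)$. The one new difficulty compared with the case $a\le 1$ is that the hypothesis $\varphi\ge d$ of that corollary can fail when $a>1$, since $d(v)$ may be as large as $2f(v)-1>\varphi(v)$. I would handle this by splitting $B=B'\cup B''$ with $B'=\{v\in B:\ d(v)\le 2f(v)-a\}$ and $B''=B\setminus B'$. On $B'$ one has $\varphi\ge d\ge d_{G[B']}$, so Corollary~\ref{cor:base} yields an independent set $I\subseteq B'$ with $\sum_{v\in B'}(\varphi(v)-d(v))\le\sum_{v\in I}(d(v)+1)(\varphi(v)-d(v))$, whereas on $B''$ every term $\varphi(v)-d(v)$ is negative and can be dropped. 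Consequently $\sum_{v\in B}(2f(v)-a-d(v))\le\sum_{v\in I}(d(v)+1)(\varphi(v)-d(v))$, and it remains only to bound the right-hand side by $a|A|$.

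The heart of the argument is the elementary quadratic inequality
$$(d(v)+1)\bigl(2f(v)-d(v)\bigr)\le\Bigl(f(v)+\tfrac{a}{2}\Bigr)^2=a\,s(v),\qquad s(v):=\tfrac{1}{a}\bigl(f(v)+\tfrac{a}{2}\bigr)^2,$$
where $s$ is the iso-toughness function from the hypothesis; this is precisely the step that consumes $a\ge 1$, because $(d+1)(2f-d)$ has maximum $(f+\tfrac12)^2$ over real $d$ and $(f+\tfrac12)^2\le(f+\tfrac a2)^2$ exactly when $a\ge 1$. Writing $(d+1)(2f-a-d)=(d+1)(2f-d)-a(d+1)\le a\,s-a(d+1)$ and summing over $I$ turns the previous bound into $a\bigl(\sum_{v\in I}s(v)-\sum_{v\in I}d(v)-|I|\bigr)$. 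Finally, because $G$ is $s$-iso-tough and $I$ is independent, removing $A\cup N_G(I)$ isolates every vertex of $I$, so $\sum_{v\in I}s(v)\le|A\cup N_G(I)|\le|A|+\sum_{v\in I}d(v)$; substituting this collapses the expression to at most $a|A|$, which is exactly the degree estimate, and Corollary~\ref{cor:main} then delivers the near $f$-factor. The main obstacle I foresee is making the quadratic bound mesh with the iso-toughness so that the multiplicative factor $a$ cancels cleanly against $s$ while the discarded set $B''$ and the term $-a|I|$ only help; the hypothesis $f\ge a$ plays no role in these estimates beyond guaranteeing that each $f(v)-a$ is nonnegative, so that the component hypothesis is a genuine toughness-type condition.
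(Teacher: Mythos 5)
Your proposal is correct and follows essentially the same route as the paper, which simply reruns the proof of Theorem~\ref{thm:iso:f-factor} with $\epsilon=a$ and $t=\frac{1}{a}(f+a/2)^2-1$: the skeleton (reduce to Corollary~\ref{cor:main}, extract an independent set via Corollary~\ref{cor:base}, bound $(d+1)(2f-d)$ by $a\,t(v)+a$, and absorb $\sum_{v\in I}t(v)$ using iso-toughness) is identical. The only divergence is cosmetic: the paper keeps $\varphi=2f-1$ (so $\varphi\ge d$ holds automatically) and spends the hypothesis $a\ge 1$ at the end via $f-a\le f-1$, whereas you take $\varphi=2f-a$, restore $\varphi\ge d$ by splitting off the harmless set $B''$, and spend $a\ge 1$ inside the quadratic bound $(f+1/2)^2\le(f+a/2)^2$; both bookkeeping choices are valid.
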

\begin{proof}
{Let $A$ and $B$ be two disjoint subsets of $V(G)$.
We may assume that $d(v)\le 2f(v)-1$ for each $v\in B$, where $d(v)= d_{G\setminus A}(v)$.
For each $v\in B$, define $\varphi(v) =2f(v)-1$ so that $\varphi(v)\ge d(v)$.
By Corollary~\ref{cor:base}, the graph $G[B]$ has an independent set $I$ such that 
\begin{equation*}
\sum_{v\in B} (\varphi(v)-d(v)) \le 
 \sum_{v\in I}(d(v)+1) (\varphi(v)-d(v)).
\end{equation*}
For each vertex $v$, define $t(v)=\frac{1}{a}((f(v)+a/2)^2)-1$.
Since $G$ is $t$-iso-tough, we have 
$\sum_{v\in I}t(v)\le |A\cup N_G(I)| \le |A|+\sum_{v\in I}d(v)$.
In addition, we must have 
$$  (d(v)+1) \big(\varphi(v)-d(v)\big)+a\, d(v)=(d(v)+1) \big(2f(v)-1+a-d(v)\big) -a\le a\, t(v),$$
which implies that 
\begin{equation*}
\sum_{v\in B} (\varphi(v)-d(v)) \le \sum_{v\in I}(d(v)+1) \big(\varphi(v)-d(v)\big)\le 
\sum_{v\in I}\big(a \, t(v) -a d(v)\big) \le a |A|.
\end{equation*}
On the other hand, by the assumption, 
$$\omega(G\setminus (A\cup B)) < \sum_{v\in A\cup B}(f(v)-a)+2=\sum_{v\in A}(f(v)-a)+\sum_{v\in B}(f(v)-a)+2.$$
Therefore, 
\begin{equation*}
\omega(G\setminus (A\cup B)) \le \sum_{v\in A} f(v)+\sum_{v\in B}(d(v)-f(v))+1.
\end{equation*}
Hence the assertion follows from Corollary~\ref{cor:main}.
}\end{proof}
\subsection{Applications to the existence of connected $\{f,f+1\}$-factors}
\label{subsec:connected-factor}
The following lemma is a useful tool for extending factors to connected factors by inserting a matching.
\begin{lem}\label{lem:tough:1-Extend}{\rm (\cite{Ellingham-Zha-2000}, see \cite{ClosedWalks})}
{Let $\epsilon$ be a real number with $0< \epsilon \le 2$. Let $G$ be a simple graph and let $F$ be a factor of $G$ with minimum degree at least $2/\epsilon+1$.
If for all $S\subseteq V(G)$,
$$\omega(G\setminus S) \le 
\frac{1}{2+\epsilon }|S|+1,$$
then $G$ has a connected factor $H$ containing $F$ 
such that for each vertex $v$, 
 $d_H(v) \in \{d_F(v), d_F(v)+1\}$, and $d_H(u)=d_F(u)$ for an arbitrary given vertex $u$.
}\end{lem}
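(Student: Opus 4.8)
The plan is to establish Lemma~\ref{lem:tough:1-Extend} by reducing the construction of the $m$-tree-connected factor $H$ to a degree-constrained augmentation problem and then invoking the structural machinery from~\cite{II} referenced in the statement. The target object $H$ must simultaneously satisfy three requirements: it contains the prescribed factor $F$, it raises each degree by at most one (with one distinguished vertex $u$ held fixed), and it possesses $m$ edge-disjoint spanning trees. Since the hypothesis already supplies a factor $F$ of large minimum degree together with a toughness-type bound $\omega(G\setminus S)\le \frac{1}{2m+\epsilon}|S|+1$, the essential content is to show that the slack $\Omega_m$ can be controlled well enough that a near-$F$ augmentation achieves $m$-tree-connectivity. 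I would phrase this as a transformation into the $m$-strongly tough framework introduced in the preliminaries, where $\Omega_m(G\setminus S)\le\max\{1,\frac{1}{t}|S|\}$ is the relevant quantity.

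First, I would translate the ordinary toughness hypothesis into information about $\Omega_m$. The partition $P$ arising from the $m$-tree-connected components of $G\setminus S$ refines the component partition, so $|P|\ge\omega(G\setminus S)$, but the subtracted term $\frac{1}{m}e_G(P)$ compensates: the minimum-degree condition $\delta(F)\ge(2m-1)(2m/\epsilon+1)$ forces each non-trivially-connected part to carry many crossing edges, so that $\Omega_m(G\setminus S)$ stays close to $\omega(G\setminus S)$. This is precisely the estimate that the degree lower bound is engineered to produce, and it is why the threshold $(2m-1)(2m/\epsilon+1)$ appears. Combining this with the assumed inequality yields a bound of the form $\Omega_m(G\setminus S)\le\frac{1}{2m+\epsilon}|S|+1$, i.e.\ $G$ is essentially $(2m+\epsilon)$-strongly tough after accounting for the factor $F$.

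Second, with the $\Omega_m$ bound in hand, I would apply the augmentation result from~\cite{II} (Corollary~7.10 there, as the statement advertises) whose proof carries over verbatim. The idea is to add a carefully chosen set of edges to $F$, increasing each degree by at most one, so that the resulting graph attains $m$ edge-disjoint spanning trees; the Nash--Williams/Tutte partition criterion for $m$-tree-connectivity is met exactly when $\Omega_m$ is small, and the $+1$ degree budget per vertex together with the strong-toughness slack is enough to push every partition across the threshold $|P|-\frac{1}{m}e(P)\le 1$. Fixing the degree of $u$ is handled by treating $u$ as an excluded vertex in the augmentation, which costs only a bounded amount absorbed by the $+1$ in the toughness bound.

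The hard part will be the first step: quantifying how the high minimum degree of $F$ converts a component-count bound into an $\Omega_m$-bound with the sharp constant. One must argue that whenever $G\setminus S$ has many components, the crossing edges guaranteed by $\delta(F)$ collapse enough parts of the $m$-tree-connected partition to keep $\frac{1}{m}e_G(P)$ large relative to $|P|$; getting the arithmetic to yield exactly the coefficient $\frac{1}{2m+\epsilon}$ rather than a weaker constant is where the specific value $(2m-1)(2m/\epsilon+1)$ must be used, and a naive bound loses a factor. Since the excerpt explicitly defers to the matching proof in~\cite{II}, I would import that estimation rather than reproduce it, and devote the writeup to verifying that the hypotheses of that corollary are met under the present normalization.
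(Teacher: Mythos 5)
The paper itself does not prove this lemma: it is quoted from~\cite{II} with a one-sentence pointer to the proof of Corollary~7.10 there, so there is no in-paper argument to compare yours against. Your proposal likewise defers the entire quantitative core to~\cite{II}, so as a standalone proof it is incomplete; more importantly, the reduction you sketch before deferring is not the right one.

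The concrete gap is in your first step. You propose to upgrade the hypothesis $\omega(G\setminus S)\le\frac{1}{2m+\epsilon}|S|+1$ to a bound of the form $\Omega_m(G\setminus S)\le\frac{1}{2m+\epsilon}|S|+1$ and to treat the lemma as an application of strong toughness of $G$. But a bound on $\Omega_m(G\setminus S)$ concerns the $m$-tree-connected decomposition of the ambient graph, and the only tool that consumes such a bound (Lemma~\ref{lem:tree-connected}) returns an $m$-tree-connected factor of maximum degree $2m+1$ with no relation to $F$; it cannot deliver $H\supseteq F$ with $d_H(v)\le d_F(v)+1$, which is the whole point of the lemma (and the reason the paper needs it, namely to upgrade a near $f$-factor to a tree-connected $(f,f+1)$-factor). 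Any correct argument must instead control the $m$-tree-connected decomposition of $F$ itself: one merges the parts of that partition by adding at most one edge of $G$ per vertex, and the threshold $(2m-1)(2m/\epsilon+1)$ enters there --- since only boundedly many (fewer than $m$) edges of $F$ can join two distinct $m$-tree-connected components of $F$, a vertex lying in a trivial component must meet on the order of $2m/\epsilon$ other components, and it is this quotient structure that is played off against the $\omega$-condition. Your narrative instead attributes the threshold to forcing crossing edges among components of $G\setminus S$; moreover the claim that $\Omega_m(G\setminus S)$ stays close to $\omega(G\setminus S)$ is unsubstantiated, since $F\setminus S$ can have minimum degree zero and the lemma assumes no isolated-toughness. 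So the proposal names the right ingredients (Nash--Williams/Tutte, the degree threshold, excluding $u$ from the augmentation) but wires them to the wrong object, and the step it labels the hard part and imports from~\cite{II} is in fact the entire proof.
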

The following result shows an application of Lemma~\ref{lem:tough:1-Extend} and Theorem~\ref{thm:main:theorem}.
\begin{thm}
Every $3$-tough $(k+1/3)$-iso-tough graph has a connected $\{k,k+1\}$-factor, where $k\ge 3$.
\end{thm}
\begin{proof}
{We may assume that $G$ simple, by deleting multiple edges from $G$ (if necessary). 
By Theorem~\ref{thm:main:theorem}, the graph $G$ has a near $k$-factor $F$ so that for all vertices $v$, $d_F(v)=k$, 
 except for at most one vertex $u$ with  $d_F(u)=k+1$.
By applying Lemma~\ref{lem:tough:1-Extend} with $\epsilon=1$, 
the graph $G$ has a connected factor $H$
such that for each vertex $v$,  $d_H(v) \in \{d_F(v), d_F(v)+1\}$, and also $d_H(u)=d_F(u)$.
This implies that $H$ is a connected $\{k,k+1\}$-factor. 
}\end{proof}
The next result shows an application of Lemma~\ref{lem:tough:1-Extend} and Corollary~\ref{cor:iso:epsilon:f-1}.
\begin{thm}\label{thm:tree-connected;(f,f+1)-factor}
{Let $\epsilon$ be a real number with $0< \epsilon \le 2$. 
 Let $G$ be graph and let $f$ be a positive integer-valued function on $V(G)$ with $f\ge 2/\epsilon+1$.
If $G$ is $f(f +1)$-iso-tough and for all $S\subseteq V(G)$, 
$$\omega(G\setminus S)\le \frac{1}{2+\epsilon}|S|+1,$$ 
then $G$ has a connected $\{f,f+1\}$-factor.
}\end{thm}
\begin{proof}
{We may assume that $G$ simple, by deleting multiple edges from $G$ (if necessary). 
By Corollary~\ref{cor:iso:epsilon:f-1}, the graph $G$ has a near $f$-factor $F$ so that for all vertices $v$, $d_F(v)=f(v)$, 
 except for at most one vertex $u$ with $d_F(u)=f(u)+1$. By applying Lemma~\ref{lem:tough:1-Extend}, the graph $G$ has a connected factor $H$ such that for each vertex $v$,  $d_H(v) \in \{d_F(v), d_F(v)+1\}$, and also $d_H(u)=d_F(u)$. This implies that $H$ is a connected $\{f,f+1\}$-factor. 
}\end{proof}
\begin{cor}
{Every $3$-tough $f(f +1)$-iso-tough graph $G$ has a connected $\{f,f+1\}$-factor, where $f$ is a positive integer-valued function on $V(G)$ with $f\ge 3$.
}\end{cor}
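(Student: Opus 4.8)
The plan is to obtain this corollary directly from Theorem~\ref{thm:tree-connected;(f,f+1)-factor} by specializing the free parameter $\epsilon$. That theorem takes $\epsilon$ as an arbitrary positive real and requires two things: a degree bound $f\ge (2m-1)(2m/\epsilon+1)$ and a component bound $\omega(G\setminus S)\le \frac{1}{2m+\epsilon}|S|+1$ for all $S\subseteq V(G)$. My first step would be to choose $\epsilon$ so that the corollary's hypothesis $f\ge 6m-3$ matches the degree bound exactly. Setting $\epsilon=m$ gives $(2m-1)(2m/\epsilon+1)=(2m-1)(2+1)=6m-3$, so the theorem's degree requirement is met verbatim, and $\epsilon=m>0$ is admissible.

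With $\epsilon=m$ fixed, the remaining step is to check that $3m$-toughness supplies the required component inequality. By the definition in the introduction, a $3m$-tough graph satisfies $\omega(G\setminus S)\le \max\{1,\frac{1}{3m}|S|\}$ for every $S\subseteq V(G)$, and in either case of the maximum this is at most $\frac{1}{3m}|S|+1$. Since $2m+\epsilon=3m$, this is precisely the bound $\omega(G\setminus S)\le \frac{1}{2m+\epsilon}|S|+1$ demanded by the theorem. The $f(f+1)$-iso-toughness hypothesis carries over unchanged, and the order condition is automatic because $t$-iso-toughness already forces $|V(G)|\ge t+1$. Applying Theorem~\ref{thm:tree-connected;(f,f+1)-factor} then delivers the desired $m$-tree-connected $(f,f+1)$-factor.

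There is essentially no obstacle here beyond bookkeeping. The one conceptual point is recognizing that $3m=2m+\epsilon$ is the largest value of $2m+\epsilon$ one can afford while the crude additive estimate $\omega(G\setminus S)\le \frac{1}{3m}|S|+1$ still follows from toughness, and that this same forced choice $\epsilon=m$ conveniently collapses the theorem's degree bound to the clean value $6m-3$. The only calculation needing a moment's care is the passage from the definitional $\max\{1,\frac{1}{3m}|S|\}$ to the additive form $\frac{1}{3m}|S|+1$, which is immediate since $1\le \frac{1}{3m}|S|+1$ and $\frac{1}{3m}|S|\le \frac{1}{3m}|S|+1$ both hold trivially.
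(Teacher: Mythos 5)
Your proposal is correct and is exactly the intended derivation: the paper states this corollary without proof because it follows immediately from Theorem~\ref{thm:tree-connected;(f,f+1)-factor} by taking $\epsilon=m$, which turns the degree bound into $(2m-1)(2m/m+1)=6m-3$ and the component condition into $\omega(G\setminus S)\le\frac{1}{3m}|S|+1$, the latter being supplied by $3m$-toughness via $\max\{1,\frac{1}{3m}|S|\}\le\frac{1}{3m}|S|+1$.
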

\begin{proof}
{Apply Theorem~\ref{thm:tree-connected;(f,f+1)-factor} with $\epsilon=1$.
}\end{proof}
%
%==============================================================================
%
%
%
%
%
%
%
%
%
%
%==============================================================================
%
%
\section{Graphs with higher toughness}
\label{sec:Graphs with higher toughness}
Our in this section is to provide  another improvement for Theorem~\ref{thm:tough:r-factor}.
Before doing so, let us  refine Theorem~\ref{thm:toughness:less:one} slightly for graphs with higher toughness.
\begin{thm}\label{thm:iso:f-factor:second-method}
{Let $G$ be a graph, let $f$ be a positive integer-valued function on $V(G)$, and let $a$ be 
a real number with $f\ge a > 1$.
For each vertex $v$, let $\epsilon_0(v)\in \{0,1\}$ such that $\epsilon_0(v)=1$ if only if  $f(v)$ and $a$ are integers with the same parity.
Then $G$ has a near $f$-factor, if 
 $G$ is $\frac{1}{4(a-1)}((f(v)+a-1)^2-\epsilon_0(v))$-iso-tough and for all $S\subseteq V(G)$,
$$\omega(G\setminus S)+\sum_{v\in I_*(G\setminus S)}(f(v)-1)\le |S|+1,$$
where $I_*(G\setminus S)$ is  the set of  center vertices of 
the star components of $G\setminus S$ in which for stars with one edge $xy$ the vertex $x$  is center whenever $f(x)\ge f(y)$.
}\end{thm}
\begin{proof}
{The proof presented here is inspired by the proof of Theorem~1 in~\cite{Katerinis-1990}. 
Let $A$ and $B$ be two disjoint subsets of $V(G)$.
For notational
simplicity, we write $d(v)$ for $d_{G\setminus A}(v)$.
Define $B_0=\{v\in B:d(v) < f(v)\}$.
By Corollary~\ref{cor:base}, the graph $G[B_0]$ has an independent set $I_0$ such that 
\begin{equation}\label{1-tough:eq:1:iso:f-factor}
\sum_{v\in B_0} (f (v)-d(v)) \le 
 \sum_{v\in I_0}(d(v) +1)(f(v)-d(v)).
\end{equation}
For each vertex $v$, let $t(v)=\frac{1}{4(a-1)}((f(v)+a-1)^2-\epsilon_0(v))$.
Since $G$ is $t$-iso-tough, we have 
$\sum_{v\in I_0}t(v)\le |A\cup N_G(I_0)| \le |A|+\sum_{v\in I_0}d(v)$. 
Since $d(v)$ is integer, we must have
$$ (d(v)+1) \big(f(v)-d(v)\big) +a\, d(v)-f(v)=d(v)(f(v)+a-1-d(v))\le (a-1)t(v),$$
which implies that 
\begin{equation}\label{1-tough:eq:3:iso:f-factor}
\sum_{v\in I_0}(d(v)+1) \big(f(v)-d(v)\big)-\sum_{v\in I_0}f(v) \le 
\sum_{v\in I_0}\big((a-1) \, t(v) -a \, d(v)\big) \le (a-1) |A|-\sum_{v\in I_0} d(v) .
\end{equation}
Therefore, 
Relations~(\ref{1-tough:eq:1:iso:f-factor}) and~(\ref{1-tough:eq:3:iso:f-factor}) 
can deduce that
\begin{equation}\label{1-tough:eq:4:iso:f-factor}
\sum_{v\in B_0}\big(f(v)-d(v)\big)
 \le  (a-1) |A|-\sum_{v\in I_0}(d(v)-f(v)).
\end{equation}
Let $I$ be a maximal independent set in $G[B]$ containing the vertices of $I_0$ so that 
 $B\setminus I \subseteq N_G(I)$.
Denote by $x_1$ the number of components $C$ of $G\setminus (A\cup B)$
such that $d_G(v,I)= 1$ for each $v\in V(C)$.
For every such a component $C$, select a vertex $z$ with $d_G(z, I)=1$.
Define $Z$ to be the set of all selected vertices. 
Also, denote by $x_2$ the number of components $C$ of $G\setminus (A\cup B)$
such that $d_G(v,I)\ge 1$ for each $v\in V(C)$, and $d_G(u,I)\ge 2$ for at least one vertex $u\in V(C)$.
Set $S=A\cup (N_G(I)\setminus Z)$.
According to this definition, it is not difficult to show that
$$|S| \le |A|+\sum_{v\in I}d(v)-x_1-x_2,$$
and
$$\omega(G\setminus (A\cup B))\le \omega(G\setminus S)+x_1+x_2- |I| .$$
On the other hand, by the assumption,
 $$\omega(G\setminus S) + \sum_{v\in I}(f(v)-1)
\le |S|+1,$$
which implies that 
$$\omega(G\setminus (A\cup B))\le |A|+\sum_{v\in I}(d(v)-f(v))+1.$$
Since  $d(v) \ge f(v)$ for each $v\in B\setminus (B_0\cup I)$, 
we must have 
\begin{equation}\label{1-tough:eq:5:iso:f-factor}
\omega(G\setminus (A\cup B))
\le 
|A|+\sum_{v\in I_0}(d(v)-f(v))+\sum_{v\in B\setminus B_0}(d(v)-f(v))+1.
\end{equation}
Therefore, Relations~(\ref{1-tough:eq:4:iso:f-factor}) and~(\ref{1-tough:eq:5:iso:f-factor}) can conclude that 
$$
\omega(G\setminus (A\cup B)) \le a|A|+ \sum_{v\in B}(d(v)-f(v))+1
\le \sum_{v\in A} f(v)+ \sum_{v\in B}(d(v)-f(v))+1.
$$
Hence the assertion follows from Corollary~\ref{cor:main}.
}\end{proof}
The following corollary is an improved version of Theorem~\ref{thm:tough:r-factor}.
\begin{cor}\label{cor:r-tough}
{A graph $G$ has a near $k$-factor, if
 for all $S\subseteq V(G)$, $ iso(G\setminus S)\le \frac{1}{k}|S|$, and
$$\omega(G\setminus S)+(k-1)\, w_*(G\setminus S)\le |S|+1,$$
where $w_*(G\setminus S)$ denotes the number of star components of $G\setminus S$.
}\end{cor}
\begin{proof}
{Apply Theorem~\ref{thm:iso:f-factor:second-method} with setting $f(v)=a=k$  when $k> 1$.
For the special case $k=1$, one can apply Theorem~\ref{thm:Tutte1-factor} directly.
}\end{proof}
\begin{cor}{\rm (\cite{Enomoto-Jackson-Katerinis-Saito-1985})}
{Every $k$-tough graph $G$ of order at least $k+1$ has a near $k$-factor.
}\end{cor}
\begin{proof}
{We may assume that $|V(G)|\ge k+2$. Let $S$ be a subset of $ V(G)$. 
If $|S|< k$, then $ \omega(G\setminus S)= 1$ and also $iso(G\setminus S)=0$. 
Since $|V(G)|\ge k+2$, by the assumption, 
one can conclude that each vertex of $G$ contains at least $k+1$ neighbours. Hence $w_*(G\setminus S)=0$.
If $|S|\ge k$, then we have $iso(G\setminus S) \le |S|/k$ and $\omega(G\setminus S)+(k-1)\, w_*(G\setminus S) \le k\, \omega(G\setminus S)\le |S|$. 
Now, it is enough to apply Corollary~\ref{cor:r-tough}.
}\end{proof}
%
%
%==============================================================================
%
%
%
%
%
%
%
%==============================================================================
%

%\bibliographystyle{siam}
%\bibliography{ref}
\end{document}